\documentclass[11pt]{amsart}

\usepackage[margin=1.12in]{geometry}
\usepackage{amsmath,amssymb,amsthm}
\newcommand{\probP}{\text{I\kern-0.15em P}}
\usepackage{microtype}
\usepackage[colorlinks=true,linkcolor=black,citecolor=black,urlcolor=black]{hyperref}
\hypersetup{
  pdftitle={Almost Every Graph Is Reconstructible from Its Token Graphs},
  pdfauthor={Yuanming Luo}
}

\newtheorem{theorem}{Theorem}
\newtheorem{lemma}[theorem]{Lemma}

\title{Almost Every Graph Is Reconstructible from Its Token Graphs}
\author{Yuanming Luo}
\date{August 2026}

\begin{document}

\begin{abstract}
Let $F_k(G)$ denote the $k$-token graph of a finite graph $G$. The token graph reconstruction conjecture asks whether, for any $1\le k<|V(G)|$, $F_k(G)\cong F_k(H)$ implies $G\cong H$. This paper proves that the conjecture holds for almost every graph.
\end{abstract}

\maketitle

Let $G$ be a finite graph of vertices set $[n]$ and let $1 \leq k < n$. The $k$-token graph $F_k(G)$ has vertex set $\binom{[n]}{k}$, where two vertices $A \sim B$ if and only if $A \triangle B \in E(G)$. The $k$-token graph was defined and systematically studied by Fabila-Monroy et al.~\cite{FabilaMonroyEtAl2012}. The construction had previously been studied for $k=2$ as the double vertex graph by Alavi, Behzad, Erd\H{o}s, and Lick~\cite{AlaviBehzadErdosLick1991}. Reconstruction from $F_2(G)$ was considered by Jacob, Goddard, and Laskar~\cite{JacobGoddardLaskar2007}; Fabila-Monroy and Trujillo-Negrete proved the conjecture for connected induced-$(C_4,\mathrm{diamond})$-free graphs in a preprint~\cite{FabilaMonroyTrujilloNegrete2022}. The use of common neighbors in this context was introduced in~\cite{JacobGoddardLaskar2007}.

There is a connection between the $2$-token graph reconstruction conjecture and the graph reconstruction conjecture: for every vertex $v$, the card $G-v$ is an induced subgraph of $F_2(G)$~\cite{FabilaMonroyEtAl2012}. Thus, if one could organize the deck of a graph into its $2$-token graph $F_2(G)$, token graph reconstruction might yield graph reconstruction. However, reconstructing the $2$-token graph from the deck is itself difficult.

We call a pair $\{u,v\}$ \emph{admissible} if $|N_G(u) \cap N_G(v)| \geq 5$. Let $A(G)$ be the set of all admissible pairs of $G$. Define the \emph{admissible augmentation} of $G$ by $C(G)=(V(G),E(G)\cup A(G))$. Let $G_n\sim G(n,1/2)$ be the random graph on $[n]$. We will prove that if $C(G_n)=K_n$, then $G_n$ satisfies the token graph reconstruction conjecture. It is therefore enough to show that $\probP(C(G_n)=K_n)\to 1$ as $n\to\infty$. Before proving this, we first establish the following lemma.

\begin{lemma}\label{lem:commutation}
For every graph $G$ and every $1\leq k<|V(G)|$, we have $C(F_k(G))=F_k(C(G))$.
\end{lemma}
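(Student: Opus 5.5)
The plan is to compare edge sets directly, since both $C(F_k(G))$ and $F_k(C(G))$ have vertex set $\binom{[n]}{k}$. Take distinct $A,B\in\binom{[n]}{k}$. By definition, $AB$ is an edge of $F_k(C(G))$ exactly when $A\triangle B=\{u,v\}$ with $uv\in E(G)$ or $|N_G(u)\cap N_G(v)|\ge 5$. Likewise, $AB$ is an edge of $C(F_k(G))$ exactly when $A\sim B$ in $F_k(G)$ or $|N_{F_k(G)}(A)\cap N_{F_k(G)}(B)|\ge 5$. The $E(G)$ part of the first condition and the adjacency part of the second coincide by the definition of $F_k$. So it remains to show that the admissibility conditions match. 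I will do this by counting common neighbours of $A$ and $B$ in $F_k(G)$, splitting into cases on $|A\triangle B|$.

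First, if $D$ is a common neighbour of $A$ and $B$, then $|A\triangle D|=|B\triangle D|=2$, so $|A\triangle B|\le 4$. Since $|A\triangle B|$ is even and nonzero, the only cases are $2$ and $4$. If $|A\triangle B|\ge 6$, there are no common neighbours. If $|A\triangle B|=4$, write $A\setminus B=\{a_1,a_2\}$ and $B\setminus A=\{b_1,b_2\}$. Any common neighbour must be $D=A-a_i+b_j$, because $D$ has to remove one element of $A\setminus B$ and add one element of $B\setminus A$ to be at distance $2$ from both. So there are at most $4<5$ common neighbours. In these cases $AB$ is an edge of neither graph. This is exactly where the threshold $5$ is used.

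The main case is $|A\triangle B|=2$. Write $A=S\cup\{u\}$ and $B=S\cup\{v\}$ with $|S|=k-1$. I will show that the common neighbours of $A$ and $B$ in $F_k(G)$ are in bijection with $N_G(u)\cap N_G(v)$. A common neighbour $D$ satisfies $|D\triangle A|=|D\triangle B|=2$. Enumerating the possibilities leaves exactly two types.
\begin{itemize}
\item Type (i) is $D=S\cup\{w\}$ with $w\notin S\cup\{u,v\}$. Here $A\triangle D=\{u,w\}$ and $B\triangle D=\{v,w\}$, so we need $w\in N(u)\cap N(v)\setminus S$.
\item Type (ii) is $D=(S\setminus\{s\})\cup\{u,v\}$ with $s\in S$. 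Here $A\triangle D=\{s,v\}$ and $B\triangle D=\{s,u\}$, so we need $s\in N(u)\cap N(v)\cap S$.
\end{itemize}
Every $w\in N(u)\cap N(v)$ is different from $u$ and $v$, so it falls into exactly one type according to whether $w\in S$. Hence $|N_{F_k(G)}(A)\cap N_{F_k(G)}(B)|=|N_G(u)\cap N_G(v)|$.

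The step needing the most care is checking that this enumeration of $D$ is exhaustive. I will do this by noting that $D$ must contain all but at most one element of $S$. If $D\supseteq S$, we are in type (i). If $D$ misses some $s\in S$, then $|D\triangle A|=2$ forces $D\ni v$, and $|D\triangle B|=2$ forces $D\ni u$, which gives type (ii). Note that $k\ge 1$ and $k<n$ place no further restriction; for $k=1$, $S$ is empty and only type (i) occurs. With this count, admissibility of $\{u,v\}$ in $G$ is equivalent to admissibility of $\{A,B\}$ in $F_k(G)$, and the edge sets agree.
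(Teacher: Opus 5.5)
Your proof is correct and follows the paper's argument essentially step for step. It uses the same case split on $|A\triangle B|$, the same two types of common neighbours giving a bijection with $N_G(u)\cap N_G(v)$ when $|A\triangle B|=2$, and the same bound of at most four (resp.\ zero) common neighbours when $|A\triangle B|=4$ (resp.\ $\ge 6$); your exhaustiveness check is in fact more explicit than the paper's. One harmless slip there: it is $|D\triangle A|=2$ that forces $u\in D$ and $|D\triangle B|=2$ that forces $v\in D$, not the other way round, but the conclusion $D=(S\setminus\{s\})\cup\{u,v\}$ is unaffected.
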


\begin{proof}
Let $U,V\in V(F_k(G))$ be distinct. Suppose $U\triangle V=\{u,v\}$. Write $R=U\cap V$, so $U=R\cup\{u\}$ and $V=R\cup\{v\}$. Let $w$ be a common neighbor of $u$ and $v$ in $G$. If $w\notin R$, then $R\cup\{w\}$ is adjacent to both $U$ and $V$ in $F_k(G)$. If $w\in R$, then $(R\setminus\{w\})\cup\{u,v\}$ is adjacent to both $U$ and $V$. Thus every common neighbor of $u$ and $v$ gives a common neighbor of $U$ and $V$.

Conversely, let $W$ be a common neighbor of $U$ and $V$ in $F_k(G)$. Since $W$ differs from both $U$ and $V$ by one token move, either $W=R\cup\{w\}$ for some $w\notin R\cup\{u,v\}$, or $W=(R\setminus\{w\})\cup\{u,v\}$ for some $w\in R$. In the first case, adjacency of $W$ to $U$ and $V$ implies $uw,vw\in E(G)$. The second case gives the same conclusion. Hence $w$ is a common neighbor of $u$ and $v$.

Thus $U$ and $V$ have the same number of common neighbors in $F_k(G)$ as $u$ and $v$ have in $G$. Moreover, $U$ and $V$ are adjacent in $F_k(G)$ if and only if $u$ and $v$ are adjacent in $G$. Therefore $\{U,V\}\in E(C(F_k(G)))$ if and only if $\{u,v\}\in E(C(G))$.

Let $n=|V(G)|$. Now suppose $|U\triangle V|\geq 4$. If $|U\triangle V|=4$, then $U$ and $V$ have at most four common neighbors in the Johnson graph $J(n,k)$, and therefore at most four common neighbors in $F_k(G)$. If $|U\triangle V|\geq 6$, they have no common neighbor in $J(n,k)$, since two token moves can relocate at most two tokens. Thus $C(F_k(G))$ does not add an edge between $U$ and $V$. They are also nonadjacent in $F_k(C(G))$ because their symmetric difference has size at least four. Hence the two graphs have the same edge set.
\end{proof}

Every automorphism of $J(n,k)$ is induced by a permutation of $[n]$, except when $n=2k$, in which case it may also be followed by complementation~\cite{RamrasDonovan2011,Ganesan2018}. When $n=2k$, complementation is an automorphism of every $F_k(G)$ because $U^c\triangle V^c=U\triangle V$. We will use automorphisms of $J(n,k)$ to obtain an isomorphism between $G$ and $H$.

\begin{theorem}\label{thm:deterministic}
If $C(G)=K_n$, then $G$ satisfies the token graph reconstruction conjecture.
\end{theorem}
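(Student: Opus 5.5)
The plan is to turn an abstract isomorphism of token graphs into an automorphism of the Johnson graph, using Lemma~\ref{lem:commutation}. Then the classification of automorphisms of $J(n,k)$ recovers a vertex permutation, and that permutation is the desired isomorphism $G\cong H$.

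\textbf{Passing to the admissible augmentation.} Let $H$ be a graph on $[n]$ and let $\varphi\colon F_k(G)\to F_k(H)$ be an isomorphism. The operation $C$ is defined purely in terms of adjacency and counts of common neighbours, so it commutes with isomorphisms. Hence $\varphi$ is also an isomorphism $C(F_k(G))\to C(F_k(H))$. By Lemma~\ref{lem:commutation} these graphs are $F_k(C(G))=F_k(K_n)=J(n,k)$ and $F_k(C(H))$, so
\[
F_k(C(H))\cong J(n,k).
\]

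\textbf{Showing $C(H)=K_n$.} Each edge $uv$ of a graph on $[n]$ contributes exactly $\binom{n-2}{k-1}$ edges to its $k$-token graph, namely the pairs $R\cup\{u\},R\cup\{v\}$ with $R\subseteq[n]\setminus\{u,v\}$ and $|R|=k-1$. Distinct edges give distinct token-graph edges, because the symmetric difference of the pair determines $uv$. So
\[
|E(F_k(C(H)))|=|E(C(H))|\binom{n-2}{k-1},
\]
and comparing with $|E(J(n,k))|=\binom n2\binom{n-2}{k-1}$ gives $|E(C(H))|=\binom n2$. Since $1\le k<n$ forces $\binom{n-2}{k-1}\ge 1$, the division is legitimate, and $C(H)=K_n$.

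\textbf{Extracting a permutation.} Now $\varphi$ is a bijection $\binom{[n]}{k}\to\binom{[n]}{k}$ that is an automorphism of $J(n,k)$. By the cited classification, $\varphi(A)=\sigma(A)$ for some permutation $\sigma$ of $[n]$, except possibly when $n=2k$, where $\varphi(A)=\sigma(A)^c$ may occur. In that case complementation is an automorphism of $F_k(H)$, so we replace $\varphi$ by its composition with complementation. We may therefore assume $\varphi(A)=\sigma(A)$ for all $A$.

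\textbf{Checking that $\sigma$ is an isomorphism.} Let $uv\in E(G)$ and choose $R\subseteq[n]\setminus\{u,v\}$ with $|R|=k-1$, which exists since $k\le n-1$. Then $R\cup\{u\}\sim R\cup\{v\}$ in $F_k(G)$. Their images $\sigma(R)\cup\{\sigma u\}$ and $\sigma(R)\cup\{\sigma v\}$ are adjacent in $F_k(H)$, and their symmetric difference is $\{\sigma u,\sigma v\}$, so $\sigma u\sigma v\in E(H)$. Running the same argument with $\varphi^{-1}$ and $\sigma^{-1}$ gives the converse, so $\sigma\colon G\to H$ is an isomorphism.

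\textbf{Expected obstacle.} The only delicate points are bookkeeping ones. First, $H$ must be taken on the same vertex count $n$, as the conjecture presupposes, since the reduction compares with $J(n,k)$ for that $n$. Second, the complementation case $n=2k$ must be handled; this is exactly where the remark preceding the theorem is used. The substantive step is the edge count forcing $C(H)=K_n$. Without it, knowing only $F_k(C(H))\cong J(n,k)$ would not let us invoke the automorphism classification.
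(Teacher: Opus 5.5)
Your proof is correct and follows the paper's argument step for step: you pass to $C$, use Lemma~\ref{lem:commutation} to get $F_k(C(H))\cong J(n,k)$, force $C(H)=K_n$ by the token-edge count, and then use the classification of automorphisms of $J(n,k)$ (composing with complementation when $n=2k$) to extract the isomorphism. The one small correction is that the conjecture does not presuppose $|V(H)|=n$, so you need to derive it, as the paper does, from $\binom{n}{k}=|V(F_k(G))|=|V(F_k(H))|=\binom{h}{k}$ with $k$ fixed.
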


\begin{proof}
Fix $1\leq k<n$, and suppose $F_k(G)\cong F_k(H)$. Let $h=|V(H)|$. Since $|V(F_k(G))|=\binom nk$ and $|V(F_k(H))|=\binom hk$, we have $h=n$.

Let $\alpha:F_k(G)\to F_k(H)$ be an isomorphism. Since an isomorphism preserves common-neighbor counts, the same bijection is an isomorphism from $C(F_k(G))$ to $C(F_k(H))$. By Lemma~\ref{lem:commutation} and $C(G)=K_n$, this gives $J(n,k)\cong F_k(C(H))$. By the token-edge count~\cite{FabilaMonroyEtAl2012}, $C(H)$ has $\binom n2$ edges, so $C(H)=K_n$. Therefore $\alpha$ is an automorphism of $J(n,k)$.

After composing with complementation when $n=2k$, if necessary, there is a permutation $\pi$ of $[n]$ such that $\alpha(U)=\pi[U]$ for every $U\in\binom{[n]}k$. For distinct $u,v\in[n]$, choose $R\in\binom{[n]\setminus\{u,v\}}{k-1}$ and put $U=R\cup\{u\}$ and $V=R\cup\{v\}$. Then
$$
uv\in E(G)\iff U\sim V\iff \pi[U]\sim\pi[V]\iff \pi(u)\pi(v)\in E(H).
$$
Thus $\pi$ is an isomorphism from $G$ to $H$.
\end{proof}

It remains to show that $C(G_n)=K_n$ with probability tending to one. For fixed distinct $u,v\in[n]$, every other vertex is a common neighbor of $u$ and $v$ with probability $1/4$, independently of the others. Hence $|N_{G_n}(u)\cap N_{G_n}(v)|\sim\operatorname{Bin}(n-2,1/4)$, and
\[
\probP\bigl(|N_{G_n}(u)\cap N_{G_n}(v)|\leq 4\bigr)
=\left(\frac34\right)^{n-2}\sum_{j=0}^4\binom{n-2}{j}3^{-j}
=O\!\left(n^4\left(\frac34\right)^n\right).
\]
If $C(G_n)\neq K_n$, then some pair has at most four common neighbors. A union bound over the $\binom n2$ pairs gives
\[
\probP(C(G_n)\neq K_n)=O\!\left(n^6\left(\frac34\right)^n\right)\to 0.
\]
Therefore $\probP(C(G_n)=K_n)\to 1$, and Theorem~\ref{thm:deterministic} shows that the token graph reconstruction conjecture holds for almost every graph.

\end{document}